\documentclass[proceedings%
]{aofa}


\usepackage[utf8]{inputenc}
\usepackage{subfigure}

\newtheorem{theorem}{Theorem}
\newtheorem{lemma}{Lemma}
\newtheorem{example}{Example}

%

\usepackage[round]{natbib}

\author[M. Drmota and M. Javanian]{Michael Drmota\addressmark{1}\thanks{Partially supported by the Austrian Science Fund FWF, Project SFB F50-02}\and Mehri Javanian}

\title[First Order Linear Partial Differential Equations Related to Urn Models]{Solutions of First Order Linear Partial Differential Equations Related to Urn Models and Central Limit Theorems}

\address{\addressmark{1}TU Wien, Institute of Discrete Mathematics and Geometry, 
Wiedner Hauptstrasse 8--10, A-1040 Wien, Austria\\
\addressmark{2}University of Zanjan, Department of Statistics, University Blvd., 45371-38791, Zanjan, Iran}
\keywords{urn models, first order partial differential equations, central limit theorem, singularity analysis}

\begin{document}
\maketitle
\begin{abstract}
\paragraph{Abstract.}
We study first order linear partial differential equations that appear, for example,  in the 
analysis of dimishing urn models with the help of the method of characteristics and
formulate sufficient conditions for a central limit theorem.
 \end{abstract}

\section{Introduction and Main result}

The purpose of this paper is to study solutions $H(z,w)$ of special 
first order linear partial differential equations that appear in the
analysis of dimishing urn models. In particular we follow the work of 
\cite{HKP}.

More precisely, we consider a P\'olya-Eggenberger urn model
with two kinds of balls and transition matrix
$M = \left( \begin{array}{cc} a & b \\ c & d \end{array} \right)$.
The process runs as follows. Suppose that the urn contains $m$ balls of the first kind
and $n$ balls of the second kind - we can interprete this state as the 
point $(m,n)$ on the integer lattice.
Then with probability $m/(n+m)$ we add $a$ balls of the first
kind and $b$ balls of the second kind, whereas with probability $n/(n+m)$ we add $c$ balls of the first
kind and $d$ balls of the second kind. (Of course, adding a negative number of balls means taking
away this number of balls.)
An absorbing state $\mathcal{S}$ is a subset $\mathcal{S}\subset \mathbb{N}\times \mathbb{N}$,
where the process stops when we arrive in $\mathcal{S}$. In what follows we will only consider
(special) dimishing urn-models, where the number of balls of the first kind eventually reaches
zero, so that the $y$-axis $\mathcal{S} = \{ (0,n) : n\ge 0 \}$ is a natural absorbing state. 

Suppose now that the process starts at $(m,n)\in \mathbb{N}\times \mathbb{N}$ with $m\ge 1$ and 
let $h_{n,m}(v) = \mathbb{E}[v^{X_{n,m}}]$ denote the probability generating function of the random variable
$X_{n,m}$ that describes the position $(0,n_0)$ of the absorbing state in  $\mathcal{S}$
when the process starts at $(m,n)$.

By definition the probability generating functions $h_{n,m}(v)$ satisfy the recurrence
\begin{equation}\label{eqhrec}
h_{n,m}(v) = \frac n{n+m} h_{n+a,m+c}(v) + \frac m{n+m} h_{n+c,m+d}(v).
\end{equation}
for $(m,n)\not\in \mathcal{S}$. The boundary values at an absorbing state $(m,n)\in \mathcal{S}$
is $h_{n,m}(v) = v^n$.

By setting 
\[
\overline H(z,w;v) = \sum_{n\ge 0,m\ge 1} h_{n,m}(v) z^n w^m
\]
it follows that this generating function $\overline H(z,w;v)$ satisfies the 
partial differential equation
\begin{equation}\label{eqPDE0}
z(1-z^{-a}w^{-b}) \overline H_z + w(1-z^{-c}w^{-d}) \overline H_w + (az^{-a}w^{-b} + dz^{-c}w^{-d}) \overline H = F(z,w)
\end{equation}
with some inhomogeneous part $F(z,w)$ that is given by the boundary values
which are partly unknown (for example $\overline H(0,w,v)$, see \cite{HKP}.

We want to mention that first order linear partial differential equations related to urn models were first systematically
discussed by \cite{FlaGabPek}, see also \cite{Morcrette}, where a special case is detailly treated.
On the other hand, it is possible to describe the probabilistic behavior of the development of urn models
very precisely, sse \cite{Janson1,Janson2}, even with absorbing states. Nevertheless the analysis of 
dimishing urns with the $y$-axis as the absorbing state is still quite special. Here we also refer
to \cite{Kuba,KubaPanholzer}, where the analysis is based directly on the recurrence (\ref{eqhrec}).
Another interesting paper that is related to dimishing urn models and lines as absorbing states is
\cite{KubaPanProd}. There the authors observe several different kinds of limiting behaviors (with 
five phase changes).

It turns out that there are some special cases, where it is more convenient to study the generating function
\begin{equation}\label{eqHdef}
H(z,w;v) = \sum_{n\ge 0, m\ge 1} {n+m \choose m} h_{n,m}(v) z^n w^m
\end{equation}
that (also) satisfies a first order linear partial differential equation of the form
\begin{equation}\label{eqPDE}
A(z,w) H_z + B(z,w) H_w - C(z,w) H = D(z,w;v),
\end{equation}
with analytic functions $A(z,w), B(z,w), C(z,w), D(z,w;v)$. (In the examples below 
$A(z,w), B(z,w)$, and $C(z,w)$ are polynomials.)
For these particular cases it turns out that the unknown boundary conditions are not needed
since they cancel in the equation. Nevertheless the methods that we are developing below are 
-- although we do not work out the general case --
suitable to deal with equations of the form (\ref{eqPDE0}). 

Note that by definition
\begin{equation}\label{eqHw0}
H(z,0;v) = 0.
\end{equation}
Furthermore, if $v=1$ then $h_{n,m}(1) = 1$ so that 
\[
H(z,w;1) = \frac 1{1-z-w} -\frac 1{1-z}.
\]
This means that
$D(z,w;1)$ is determined by
\[
D(z,w;1) = \frac{A(z,w) + B(z,w) - (1-z-w)C(z,w)}{(1-z-w)^2} - \frac{A(z,w)- (1-z)C(z,w)}{(1-z)^2}.
\]
In the present context it is convenient to assume that the function
\[
H_0(z,w) = \sum_{n,m\ge 0} {n+m \choose m}  z^n w^m = 1/(1-z-w)
\]
is a solution
of the homogeneous differential equation $A(z,w) H_z + B(z,w) H_w - C(z,w) H = 0$ so that 
\begin{equation}\label{eqcond1}
A(z,w) + B(z,w) = (1-z-w)C(z,w)
\end{equation}
and, thus, 
\begin{equation}\label{eqcond2}
D(z,w;1) = -\frac{A(z,w)- (1-z)C(z,w)}{(1-z)^2}
\end{equation}

We first state the following three examples from \cite{HKP} (that we present in
a slightly modified way).

\begin{example}{\rm 
The pill's problem (see \cite{BP,Kn}) has transition matrix $M = \left( \begin{array}{cc} -1 & 0 \\ 1 & -1 \end{array} \right)$
and absorbing state $\mathcal{S} = \{ (0,n) : n\ge 0 \}$, and the corresponding differential equation 
is given by
\[
(z-z^2-w)H_z + w(1-z)H_w - zH = \frac{w v}{(1-vz)^2}.
\]
Here it follows that $h_{n,m}(v)$ is given by 
\[
h_{n,m}(v) = mv \int_0^1 (1+(v-1)q)^n (1-q - (v-1)q \log q)^{m-1}dq.
\]
Finally the corresponding random variable $X_{n,m}$ has limiting distribution
\[
\frac{X_{n,m}}{ \frac nm + \log m} \to X \qquad (m\to\infty),
\]
where $X$ has density $e^{-x}$, $x\ge 0$, or
\[
\frac{X_{n,m}}n \to {\rm Beta}(1,m) \qquad (\mbox{fixed $m\ge 1$, $n\to\infty$}),
\]
where (the beta distribution) ${\rm Beta}(1,m)$ has density $m(1-x)^{m-1}$, $0\le x\le 1$.
}
\end{example}

\begin{example}{\rm 
A variant of the pill's problem has transition matrix $M = \left( \begin{array}{cc} -1 & 0 \\ 1 & -2 \end{array} \right)$
and absorbing state $\mathcal{S} = \{ (0,n) : n\ge 0 \}\cup \{ (1,n) : n\ge 0 \}$.
Due to the parity condition in $m$ (that is, only even $m$ occur), it is convenient to consider the generating function
\[
H(z,w;v) = \sum_{n\ge 0,m\ge 1} {n+2m \choose n} h_{n,2m}(v) z^n w^m
\]
that satisfies the differential equation
\[
-w H_z + 2w(1-z)H_w - (1-z)H = \frac{wv}{(1-vz)^2}.
\]
Here we obtain an explicit solution of the form
\begin{eqnarray*}
H(z,w) &=& \frac{w}{v\left( (1-z)^2-w - ((v-1)/v))^2 \right)\left( 1 - z - (v-1/v) \right) }  \\
&+& \frac{(v-1) \sqrt w}{v^2 \left( (1-z)^2-w - ((v-1)/v))^2 \right)^{3/2} }
\arctan \left( \frac  {\sqrt{w} \sqrt{(1-z)^2-w - ((v-1)/v))^2} }  { (1-z)^2-w - (1-z)(v-1)/v) }     \right).
\end{eqnarray*}
which leads to the limiting behavior:
\[
\frac{X_{n,2m}}{\frac{n}{\sqrt m} + 2 \sqrt m} \to R \qquad (m\to\infty),
\]
where $R$ has density $2xe^{-x^2}$, $x\ge 0$, or
\[
\frac{X_{n,2m}}n \to \sqrt{{\rm Beta}(1,m)}, \qquad (\mbox{$m\ge 1$ fixed, $n\to\infty$}).
\]
}
\end{example}

\begin{example}{\rm 
The cannibal urn (see \cite{Pittel,Kuba}) 
has transition matrix $M = \left( \begin{array}{cc} 0 & -1 \\ 1 & -2 \end{array} \right)$
and absorbing state $\mathcal{S} = \{ (0,n) : n\ge 0 \}\cup \{ (1,n) : n\ge 0 \}$ and the
generating function
\[
H(z,w;v) = \sum_{n\ge 0,m\ge 1} {n+m \choose n} h_{n+1,m}(v) 
\]
satisfies the differential equation
\[
-(z+w) H_z + H_w - H = \frac{(1+wv)v}{(1-vz)^2}.
\]
The solution is explicitly given by
\[
H(z,w;v) = \frac {ve^w}{1 - (1-e^w(1-z-w))v} - \frac{v}{1-vz}
\]
and we have a central limit theorem of the form
\[
\frac{X_{n,m} - \mathbb{E}\, X_{n,m} }{\sqrt{\mathbb{V}{\rm ar} X_{n,m}   }} \to N(0,1) \qquad
(m+n\to\infty).
\]
}
\end{example}

These three examples show that although the linear differential equations look
very similar the limiting behavior of the encoded random variable $X_{n,m}$ 
seems to be far from being universal. The main purpose of the present paper
is to shed some light on this phenomenon. In particular we detect a sufficient condition that
ensures a central limit theorem.

\begin{theorem}\label{Th1}
Suppose that $X_{n,m}$, $n\ge 0$, $m\ge 1$ are non-negative discrete random
variables with probabilty generating function $h_{n,m}(v) = \mathbb{E}[ v^{X_{n,m}} ]$
such that the generating function $H(z,w;v)$, given by (\ref{eqHdef}) satisfies a
first order linear differential equation of the form (\ref{eqPDE}), where 
the coefficient functions $A,B,C$ as well as the ratios $A(z,w)/B(z,w)$, $C(z,w)/B(z,w)$ 
are analytic in an open set that contains $z,w$ with $|z|+ |w|\le 1$ such that
the ratio $A(z,w)/B(z,w)$ is negative for non-negative $z,w$.
Furthermore we assume that (\ref{eqcond1}) is satisfied (which also implies (\ref{eqcond2})) and that
$D(z,w;v)$ can be represented as 
\[
D(z,w;v) = \frac{a(z,w;v)}{(1- b(z,w;v))^2},
\]
where the functions $a,b$ are also in an open set that contains $z,w$ with $|z|+ |w|\le 1$.
In particular in accordance with (\ref{eqcond1}) we have $a(z,w;1) = -A(z,w) + (1-z)C(z,w)$ and
$b(z,w;1) = z$.

Let $f(c,s)$ be the solution of the 
differential equation $\frac{\partial f}{\partial s} = A(f,s)/B(f,s)$ with $f(c,0)= c$ and
let $Q(z,w)$ denote the function that satisfies $f(Q(z,w),w) = z$.
We further assume that the function $f(Q(z,w),s)$ is analytic in an open set that contains $s,z,w$ with 
$|z|+ |w|\le 1$ and $|z|+ |s|\le 1$ and non-decreasing for positive and real
$z$ and $w$, 

Let $z_0(\rho;v)$ and $w_0(\rho;v)$ denote the solutions of the system of equations
\[
b(f(Q(z,w),0),0;v) = 1, \qquad  z \frac{\partial}{\partial z} b(f(Q(z,w),0),0;v) = 
\rho w \frac{\partial}{\partial w} b(f(Q(z,w),0),0;v)
\]
with $z_0(\rho;1) = \rho/(1+\rho)$ and $w_0(\rho;1) = 1/(1+\rho)$.
Furthermore set $h(\rho;v) = - \log z_0(\rho;v) - \rho \log w_0(\rho;v)$, 
$\mu(\rho) = \left.\frac{\partial}{\partial v} h(\rho;v)\right|_{v=1}$ and
$\sigma^2(\rho) = \left.\frac{\partial^2}{\partial v^2} h(\rho;v)\right|_{v=1} + \mu$.
If 
\[
\mu(\rho)  > 0 \qquad \mbox{for $\rho \in [\alpha,\beta]$}
\]
for some positive $\alpha,\beta$
then $X_{n,m}$ satisfies a central limit theorem of the form
\[
\frac{X_{n,m} - \mathbb{E}\, X_{n,m} }{\sqrt{ n }} \to N(0,\sigma^2(m/n)) \qquad
\]
uniformly for $m+n\to\infty$,  $m/n \in [\alpha,\beta]$, 
where 
\[
\mathbb{E}\, X_{n,m} \sim \mu(m/n)\, n \qquad\mbox{and}\quad
\mathbb{V}{\rm ar} X_{n,m} \sim \sigma^2(m/n)\, n.
\]
\end{theorem}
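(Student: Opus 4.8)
The plan is to solve the PDE \eqref{eqPDE} explicitly by the method of characteristics and then extract the asymptotics of the coefficients $\binom{n+m}{m}h_{n,m}(v)$ by a two-dimensional singularity analysis, uniformly in the parameter $v$ near $1$. First I would set up the characteristic curves: along a curve $z = f(Q(z,w),s)$ parametrised by $s$ (so that $s$ runs from $0$ up to $w$), equation \eqref{eqPDE} becomes a linear ODE in the single variable $s$, namely $\frac{d}{ds}\bigl(H \cdot e^{-\int C/B}\bigr) = (D/B) e^{-\int C/B}$, whose solution is an explicit integral from $0$ to $w$ of $D/B$ against the integrating factor, plus a boundary term at $s=0$ determined by $H(z,0;v)=0$ from \eqref{eqHw0}. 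Using the hypothesis that $H_0(z,w)=1/(1-z-w)$ solves the homogeneous equation (condition \eqref{eqcond1}), the integrating factor can be written in closed form in terms of $1-z-w$ along the characteristic, which makes the boundary term vanish cleanly. The upshot is a representation
\[
H(z,w;v) = \frac{1}{1-z-w}\int_0^w \frac{a\bigl(f(Q(z,w),s),s;v\bigr)\,(1-f(Q(z,w),s)-s)}{B\bigl(f(Q(z,w),s),s\bigr)\,\bigl(1-b(f(Q(z,w),s),s;v)\bigr)^2}\,ds
\]
(up to the precise form of the various factors, which I would verify by differentiating and using \eqref{eqcond1}); the key qualitative feature is that the only source of singularity in $(z,w)$, apart from the harmless $1-z-w$, is the vanishing of $1-b(f(Q(z,w),s),s;v)$ for some $s\in[0,w]$.

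The second step is to locate the dominant singularity of $H(\cdot,\cdot;v)$ as a function of two variables. Since $f(Q(z,w),s)$ is non-decreasing in its arguments, the factor $1-b(f(Q(z,w),s),s;v)$ is smallest at the endpoint $s=0$, where it equals $1-b(f(Q(z,w),0),0;v)$; hence the singular locus is the curve $b(f(Q(z,w),0),0;v)=1$, and near it $H$ behaves like a constant (in the non-critical directions) times $\bigl(1-b(f(Q(z,w),0),0;v)\bigr)^{-1}$ — effectively a simple-pole-type singularity in the critical direction after integrating the square over $s$. I would then apply the standard bivariate saddle-point/singularity-analysis scheme (à la Flajolet–Sedgewick, or Drmota's work on systems): fix the ray $m/n=\rho$, write $[z^n w^m]H = \frac{1}{(2\pi i)^2}\oint\oint H\, z^{-n-1}w^{-m-1}\,dz\,dw$, and push the contours to the singular curve. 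The exponential rate is governed by minimising $-\log z - \rho\log w$ subject to $b(f(Q(z,w),0),0;v)=1$; the Lagrange condition for this constrained optimisation is exactly the second equation defining $z_0(\rho;v),w_0(\rho;v)$, and the minimal value is $h(\rho;v)$. So $[z^n w^m]H \approx e^{n\,h(\rho;v)}\cdot(\text{polynomial correction})$ uniformly for $v$ in a neighbourhood of $1$ and $\rho\in[\alpha,\beta]$.

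The third step converts this into the central limit theorem via the quasi-power framework. Writing $h_{n,m}(v) = [z^n w^m]H(z,w;v)\big/\binom{n+m}{m}$ and noting $\binom{n+m}{m}\sim e^{n\,h(\rho;1)}\cdot(\text{correction})$ with $h(\rho;1) = -\log\frac{\rho}{1+\rho} - \rho\log\frac{1}{1+\rho} = (1+\rho)\log(1+\rho)-\rho\log\rho$, we get
\[
\mathbb{E}\bigl[v^{X_{n,m}}\bigr] = h_{n,m}(v) = e^{\,n\,(h(\rho;v)-h(\rho;1))}\,\bigl(1+O(1/n)\bigr),
\]
uniformly for $v$ near $1$. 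This is precisely a quasi-power representation with exponent $U_n(v) \sim n\,(h(\rho;v)-h(\rho;1))$, so Hwang's quasi-power theorem applies: the mean is asymptotically $n\,\partial_v(h(\rho;v)-h(\rho;1))|_{v=1} = n\,\mu(\rho)$ and the variance is $n\cdot\bigl(\partial_v^2 h(\rho;v)|_{v=1} + \partial_v h(\rho;v)|_{v=1}\bigr) = n\,\sigma^2(\rho)$ after the usual correction term from passing between the probability generating function in $v$ and in $e^t$. The condition $\mu(\rho)>0$ guarantees that $X_{n,m}$ is of genuine order $n$ (not concentrated at $0$), so that the Gaussian limit with the stated normalisation $\sqrt{n}$ is non-degenerate; one then checks $\sigma^2(\rho)>0$ on $[\alpha,\beta]$ (or handles the degenerate case separately) and invokes continuity of the quasi-power estimates in $\rho$ to obtain uniformity for $m/n\in[\alpha,\beta]$.

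The main obstacle I anticipate is making the bivariate singularity analysis rigorous and \emph{uniform} in both $v$ (near $1$) and $\rho$ (on a compact set): one must verify that the singular curve $b(f(Q(z,w),0),0;v)=1$ is smooth, that the constrained minimisation has a unique non-degenerate solution $(z_0,w_0)$ varying analytically in $(\rho,v)$, that the contour of integration can actually be deformed to pass through the saddle without crossing other singularities (here the hypotheses that $A/B<0$ and that $f(Q(z,w),s)$ is analytic and non-decreasing on the relevant polydisc are what keep the geometry under control), and that the integration over $s\in[0,w]$ in the representation of $H$ does not create additional singularities closer to the origin. Once the transfer theorem is established in this uniform form, the passage to the CLT via the quasi-power theorem is routine.
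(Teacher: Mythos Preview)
Your proposal is correct and follows essentially the same three-step route as the paper: method of characteristics to obtain an integral representation of $H$, singularity analysis to identify the simple-pole behaviour along the curve $b(f(Q(z,w),0),0;v)=1$, then bivariate coefficient asymptotics feeding into Hwang's quasi-power theorem. One refinement worth noting: you exploit the homogeneous solution $H_0=1/(1-z-w)$ to write the integrating factor in closed form, which the paper does not do (it leaves $\exp\int C/B$ as an abstract analytic function); your version is slightly cleaner but not materially different. One small slip: the reason the endpoint $s=0$ is where the denominator first vanishes is that $\partial f/\partial s = A/B < 0$ makes $f(Q(z,w),s)$ \emph{decreasing} in $s$, so $f$ is largest at $s=0$; the assumed monotonicity in $z,w$ is used to see that this maximum crosses $1$ exactly on the curve $z+w=1$ when $v=1$.
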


This theorem does not provide a full answer to the problem. However, it 
is a first step that covers at least a part, where we obtain a central limit theorem.
In future work we will provide a more complete picture, also covering the
cases, where there is no central limit theorem. For example it is not clear whether
it is possible to formulate conditions that refer directly to the entries of the
transition matrix
$M = \left( \begin{array}{cc} a & b \\ c & d \end{array} \right)$.
In particular it is an open question whether it is possible to adapt Theorem~\ref{Th1}
so that all cases of (\cite{Kuba}) are covered.

Nevertheless, we will discuss the three examples (from above) and another one in the next section.
We also present a (short version of the) proof of Theorem~\ref{Th1} in the remaining
parts of the paper.

\section{Discussion of Examples}

We do not work out the details here but in {\bf Examples 1} and {\bf 2} several
conditions of Theorem~\ref{Th1} are not satisfied, in particular we have
$\mu(\rho) = 0$.

\bigskip

The most interesting example is {\bf Example 3}.  Here we have $A(z,w) = -z-w$, $B(z,w) = C(z,w)= 1$, and
$D(z,w;v) = (1+wv)v/(1-vz)^2$, that is $a(z,w;v) = (1+wv)v$ and $b(z,w;v) = vz$.
It is easy to check that the conditions of Theorem~\ref{Th1} are satisfied.

In particular it follows that $f(c,s) = 1- s - e^{-s}(1-c)$,
$Q(z,w) = 1- e^w(1-z-w)$, and $f(Q(z,w),s) = 1-s - e^{w-s}(1-z-w)$.
From that we obtain $b(f(Q(z,w,0),0;v) = (1 - e^w(1-z-w))v$. Hence
the functions $z=z_0(\rho;v)$ and $w=w_0(\rho;v)$ satisfy the system of equations
\[
(1 - e^w(1-z-w))v = 1, \qquad z = \rho w(z+w)
\]
from which we obtain (by implicit differentiation)
\[
\mu(\rho)  = -\frac {z_{0,v}(\rho;1)}{z_0(\rho;1)} - \rho \frac{w_{0,v}(\rho;1)}{w_0(\rho;1)}
= 2 e^{-1/(1+\rho)} > 0.
\]
Thus, the central limit theorem follows automatically.

\bigskip

We add a new example in order to demonstrate the applicabilty of Theorem~\ref{Th1} 
(even if this example is not related to an urn model).
By the way this example can be easily generalized. 
Suppose that $H(z,w;z)$ satisfies the differential equation
\[
-(z+2w)H_z + (1+w)H_w - H = \frac{(1+2w)v}{(1-vz)^2}.
\]
Then again all assumptions of Theorem~\ref{Th1} are satisfied. 
Here we have  $A(z,w) = -z-2w$, $B(z,w) = 1+w$, $C(z,w)= 1$, and
$D(z,w;v) = (1+2w)v/(1-vz)^2$, that is $a(z,w;v) = (1+2w)v$ and $b(z,w;v) = vz$.

From this it follows that 
\[
f(c,s) = \frac{c}{1+s} - \frac{s^2}{1+s} \quad\mbox{and}\quad
Q(z,w) = (1+w)z + w^2
\]
and consequently
\[
f(Q(z,w),s) = \frac{(1+w)z + w^2-s^2}{1+s}.
\]
The functions $z=z_0(\rho;v)$ and $w=w_0(\rho;v)$ satisfy the system of equations
\[
((1+w)z+w^2)v = 1, \qquad z(1+w) = \rho w(z+2w)
\]
from which we obtain (by implicit differentiation)
\[
\mu(\rho)  = -\frac {z_{0,v}(\rho;1)}{z_0(\rho;1)} - \rho \frac{w_{0,v}(\rho;1)}{w_0(\rho;1)}
= 2 \frac{(1+\rho)^2}{(2+\rho)^2} > 0.
\]
Thus, the central limit theorem follows (again) automatically.

\section{The method of characteristics}

The first step of the proof is to use the theory of characteristics to provide an integral 
representation (\ref{eqHrep}) of the solution of the partial differential equation (\ref{eqPDE}).

We start with the inhomogeneous differential equation (\ref{eqPDE}), where $v$ is considered as
a parameter. It is a standard procedure to transform (\ref{eqPDE}) into 
a homogeneous equation. Let $Q = Q(z,w,H;v)$ denote the solution of the linear differential equation
\begin{equation}\label{eqPDE2}
A(z,w) Q_z + B(z,w) Q_w + (C(z,w)H + D(z,w;v)) Q_H = 0.
\end{equation}
Then the solution $H(z,w;v)$ of the original equation (\ref{eqPDE}) satisfies
the implicit equation 
\begin{equation}\label{eqQHequ}
Q(z,w,H(z,w;v);v) = const.
\end{equation}
Thus, if we can solve (\ref{eqPDE2}) then we also get the solution of (\ref{eqPDE}).
The advantage of the equation (\ref{eqPDE2}) is that it can be handled with the
method of characteristics (see \cite{char}).

First we translate (\ref{eqPDE2}) into a system of first order ordinary differential
equations:
\begin{equation}\label{eqPDE3}
\frac{dz}{dt} = A(z,w), \quad \frac{dw}{dt} = B(z,w), \quad 
\frac{dH}{dt} = C(z,w)H + D(z,w;v),
\end{equation}
where $z=z(t)$, $w = w(t)$, $H = H(t)$ are functions in $t$. A characteristic of
(\ref{eqPDE3}) is a function $F(z,w,H)$ for which we have 
$Q(z(t),w(t),H(t)) = const.$ Clearly, every characteristic $Q$ is a solution of 
(\ref{eqPDE2}). It is well known that a system of three equations has
two independent characteristics $Q_1,Q_2$ as a basis and every characteristic 
$Q$ can be expressed as $Q = F(Q_1,Q_2)$ for an arbitrary (differentiable) function $F$.
In the present case we have to solve the equation (\ref{eqQHequ}) which simplifies 
the situation. More precisely we can rewrite (\ref{eqQHequ}) to an equation of the
form 
\begin{equation}\label{eqQHequ2}
Q_2(z,w,H) = \tilde F(Q_1(z,w,H)),
\end{equation}
where $\tilde F$ is an arbitrary (differentiable) function.

In order to calculate two independent characteristics it is convenient to {\it eliminate}
$t$ from the system (\ref{eqPDE3}) which gives rise to a simpler system of
differential equation:
\begin{equation}\label{eqPDE4}
\frac{dz}{dw} = \frac{A(z,w)}{B(z,w)}, \quad 
\frac{dH}{dw} = \frac{C(z,w)}{B(z,w)}H + \frac{D(z,w;v)}{B(z,w)},
\end{equation}
where $z=z(w)$ and $H = H(w)$ are now considered as functions is $w$.

Let $z = f(c_1,w)$ be a one-parametric solution of the differential 
equation $\frac{dz}{dw} = \frac{A(z,w)}{B(z,w)}$, where $c_1$ is, for example,
the initial value $c_1 = z(0)$. If we express $c_1$ from the expression $z = f(c_1,w)$, 
that is, $c_1 = Q_1(z,w)$ then $Q_1$ is a characteristic of the system (\ref{eqPDE3}).
Note that $Q_1$ does not depend on $H$ and also not on $v$. Actually $Q_1$ just solves
the equation $A(z,w) Q_z + B(z,w) Q_w = 0$. Nevertheless it is a non-trivial characteristic
of (\ref{eqPDE3}).

In order to obtain a second characteristic we have to solve the second equation of 
(\ref{eqPDE4}) which is a first order linear differential equation. Note that we can
substitute $z = f(c_1,w)$ and obtain as a solution
\[
H = \exp\left( \int_0^w \frac{C(f(c_1,s),s)}{B(f(c_1,s),s)}\, ds \right) 
\left(\int_0^w \frac{D(f(c_1,s),s;v)}{B(f(c_1,s),s)} 
\exp\left( -\int_0^s \frac{C(f(c_1,t),t)}{B(f(c_1,t),t)}\, dt \right)ds  
 + c_2 \right),
\]
where $c_2$ is some constant. Again if we express $c_2$ explicitly (and eliminate $c_1$
with the help of $c_1 = Q_1(z,w)$) we get 
another characteristic:
\begin{eqnarray*}
c_2 &=& Q_2(z,w,H) \\
&=& H\exp\left(-\int_0^w \frac{C(f(Q_1(z,w),s),s)}{B(f(Q_1(z,w),s),s)}\, ds \right) \\
&-&\int_0^w \frac{D(f(Q_1(z,w),s),s;v)}{B(f(Q_1(z,w),s),s)} 
\exp\left( -\int_0^s \frac{C(f(Q_1(z,w),t),t)}{B(f(Q_1(z,w),t),t)}\, dt \right)ds.
\end{eqnarray*}
Now if we apply (\ref{eqQHequ2}) we obtain the following representation for $H$:
\begin{eqnarray*}
H &=& \exp\left(\int_0^w \frac{C(f(Q_1(z,w),s),s)}{B(f(Q_1(z,w),s),s)}\, ds \right) 
\\
&&\times \left(  \int_0^w \frac{D(f(Q_1(z,w),s),s;v)}{B(f(Q_1(z,w),s),s)} 
\exp\left( -\int_0^s \frac{C(f(Q_1(z,w),t),t)}{B(f(Q_1(z,w),t),t)}\, dt \right)ds  
+ \tilde F(Q_1(z,w)) 
\right).
\end{eqnarray*}
In our context we will assume that (\ref{eqHw0}) holds, that is, $H(z,0;v) = 0$,
which implies that $\tilde F( x ) = 0$. 
Consequently we have
\begin{eqnarray}
H(z,w;v) &=& \exp\left(\int_0^w \frac{C(f(Q_1(z,w),s),s)}{B(f(Q_1(z,w),s),s)}\, ds \right) 
\label{eqHrep}\\
&&\times \left(  \int_0^w \frac{D(f(Q_1(z,w),s),s;v)}{B(f(Q_1(z,w),s),s)} 
\exp\left( -\int_0^s \frac{C(f(Q_1(z,w),t),t)}{B(f(Q_1(z,w),t),t)}\, dt \right)ds  
\right).  \nonumber
\end{eqnarray}

\section{Singularity analysis}

Next we assume that the assumptions of Theorem~\ref{Th1} are satisfied so that we
can analyze the analytic properties of the solution function $H(z,w;v)$ that is given
by (\ref{eqHrep}). Actually we will show that if $v$ is close to $1$ that the dominant
singularity comes from a curve that is a pertubation of the curve $z+w = 1$.

First we note that by assumption the function $f(Q_1(z,w),s)$ is regular as well as
the fraction $C(z,w)/B(z,w)$. Consequently the function
\[
(z,w)\mapsto K(z,w) = \int_0^w \frac{C(f(Q_1(z,w),s),s)}{B(f(Q_1(z,w),s),s)}\, ds
\]
is analytic, too. Thus, it remains to consider the integral
\begin{eqnarray*}
&&\int_0^w \frac{D(f(Q_1(z,w),s),s;v)}{B(f(Q_1(z,w),s),s)} 
\exp\left( - K(z,s) \right)ds  \\
&&= \int_0^w \frac{a(f(Q_1(z,w),s),s;v)\exp\left( - K(z,s) \right)/B(f(Q_1(z,w),s),s) }
{(1 - b(f(Q_1(z,w),s),s;v))^2} \, ds.
\end{eqnarray*}
First let us assume that $v=1$. In this case we know by assumption that
$H(z,w;1) = 1/(1-z-w) - 1/(1-z)$. Furthermore we have $b(z,w;1) = z$.
Thus the above integral simplifies to
\[
 \int_0^w \frac{L(z,w,s)}{(1- f(Q_1(z,w),s))^2}\ ds,
\]
where $L(z,w,s)$ is a non-zero regular function. As long as $f(Q_1(z,w),s) \ne 1$
for $0\le s\le w$ then the integral represents a regular function in $z$ and $w$. 
Hence, we have to detect $s$ for which
$f(Q_1(z,w),s) = 1$. Let us first assume that $z$ and $w$ are real and positive. 
We also recall that by assumption $\frac{\partial f}{\partial s} = A(f,s)/B(f,s)< 0$. 
Thus, if we start with $z,w$ close
to zero and increase them we observe that the first critical instance occurs when
$f(Q_1(z,w),0) = 1$. Of course this has to coincide with the condition $z+w = 1$
and we have to recover the (known) singular behaviour $1/(1-z-w)$.

Actually we can use the following easy lemma (which follows from partial integration).

\begin{lemma}
Suppose that $N(s)$ and $D(s)$ are three times continuously
differentiable functions such that $D(s)\ne 0$ and $D'(s)\ne 0$
Then we have
\begin{eqnarray*}
\int \frac{ N(s)}{D(s)^2}\, ds &=& -\frac{N(s)}{D(s)D'(s)}
+ \frac{\log D(s)}{D'(s)} \left( \frac{N(s)}{D'(s)}\right)' \\
&-& \int \log D(s) \left( \frac 1{D'(s)} \left( \frac{N(s)}{D'(s)}\right)'  \right)' \, ds.
\end{eqnarray*}
\end{lemma}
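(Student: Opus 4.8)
The plan is to derive the identity from two successive integrations by parts, each removing one power of $D$ from the denominator.

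First, since $D'(s)\neq 0$ and $\left(-1/D(s)\right)' = D'(s)/D(s)^2$, I would write
\[
\frac{N(s)}{D(s)^2} = \frac{N(s)}{D'(s)}\left(-\frac{1}{D(s)}\right)',
\]
and integrate by parts with $u = N/D'$ and $dv = \left(-1/D\right)'\,ds$, which gives
\[
\int \frac{N(s)}{D(s)^2}\,ds = -\frac{N(s)}{D(s)D'(s)} + \int \frac{1}{D(s)}\left(\frac{N(s)}{D'(s)}\right)'\,ds .
\]

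Second, I would apply the same idea to the remaining integral, now with $\log D$ in place of $-1/D$: since $(\log D(s))' = D'(s)/D(s)$, we have $1/D(s) = (\log D(s))'/D'(s)$, so
\[
\int \frac{1}{D(s)}\left(\frac{N(s)}{D'(s)}\right)'\,ds = \int \left(\frac{1}{D'(s)}\left(\frac{N(s)}{D'(s)}\right)'\right)(\log D(s))'\,ds ,
\]
and integration by parts with $u = \frac{1}{D'}\left(\frac{N}{D'}\right)'$ and $dv = (\log D)'\,ds$ turns this into
\[
\frac{\log D(s)}{D'(s)}\left(\frac{N(s)}{D'(s)}\right)' - \int \log D(s)\left(\frac{1}{D'(s)}\left(\frac{N(s)}{D'(s)}\right)'\right)'\,ds .
\]
Substituting this back into the previous display yields exactly the stated formula.

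The hypotheses enter only to justify these manipulations: $D(s)\neq 0$ makes $1/D$ and $\log D$ meaningful (a branch of the logarithm being fixed when $D$ is not a positive real, which does not affect the formal computation), $D'(s)\neq 0$ licenses the divisions by $D'$, and the assumption that $N$ and $D$ are three times continuously differentiable ensures that the integrand $\log D\cdot\left(\frac{1}{D'}\left(\frac{N}{D'}\right)'\right)'$ of the final integral is continuous, so that the second integration by parts is legitimate. I do not expect any genuine obstacle here; the one point worth keeping in mind is that the identity is an equality of indefinite integrals, hence should be read up to an additive constant — equivalently, one may simply differentiate the right-hand side and check that the two $\log D$ contributions cancel and the remaining terms collapse to $N(s)/D(s)^2$.
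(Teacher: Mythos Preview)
Your argument is correct and is exactly what the paper has in mind: it merely remarks that the identity ``follows from partial integration,'' and your two successive integrations by parts (first against $(-1/D)'$, then against $(\log D)'$) spell this out precisely.
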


If we apply this lemma in our context it follows that
\[
 \int_0^w \frac{L(z,w,s)}{(1- f(Q_1(z,w),s))^2}\ ds = 
 \frac{\tilde L_1(z,w)} {1- f(Q_1(z,w),0)} + O\left( \log|1- f(Q_1(z,w),0)|\right)
\]
for positive real $z,w$ with $z+w\to 1$ (and a proper non-zero analytic function $\tilde L_1(z,w)$).
Summing up we obtain for positive real $z,w$ with $z+w\to 1$ the asymptotic representation
\[
H(z,w;1) = \frac{\tilde L_2(z,w)} {1- f(Q_1(z,w),0)} + O\left( \log|1- f(Q_1(z,w),0)|\right)
\]
for some non-zero analytic function $\tilde L_2(z,w)$. In particular it follows that
$1- f(Q_1(z,w),0)$ can be written as
\[
1- f(Q_1(z,w),0) = \tilde L_2(z,w) (1-z-w).
\]
Of course the same kind of analysis applies if $z$ and $w$ are complex numbers close to the positive real
line. Furthermore we observe that the integral representation for $H(z,w;1)$ will not get singular
if $z+w\ne 1$. By continuity this also holds if $v$ is close to $1$ and $|1-z-w|\ge \delta$ for
some $\delta > 0$. 

Finally if $v$ is close (but different) to $1$ and $z$ and $w$ satisfy $|1-z-w| < \delta$ 
then we just have to modify the above analysis slightly and observe that 
$H(z,w;v)$ can be represented as
\[
H(z,w;v) = \frac{\tilde L_2(z,w;v)} {1- b(f(Q_1(z,w),0),0;v)} + O\left( \log|1- b(f(Q_1(z,w),0),0;v)|\right).
\]
Thus, the equation
\begin{equation}\label{eqsingcurve}
b(f(Q_1(z,w),0),0;v) = 1
\end{equation}
determines the dominant singularity of $H(z,w;v)$. By the implicit function theorem it follows 
that there exists a solution of (\ref{eqsingcurve}) of the form $z=z_0(w;v)$ with
$z_0(w;1) = 1-w$ (if $w$ is close to the positive real line segment $[0,1]$).

\section{A central limit theorem}

We start with a lemma on bivariate asymptotics for generating functions
in two variables which is a slight generalization of the smooth case
in Pemantle and Wilson's book \cite{PW}.

\begin{lemma}
Suppose that $f(z,w)$ is a generating function in two variables that can
be written in the form 
\[
f(z,w) = \frac{N(z,w)}{D(z,w)},
\]
where $N$ and $D$ are regular functions such that the system of equations
\begin{equation}\label{eqsaddle}
D(z,w) = 0, \quad w D_w(z,w) = \rho z D_z(z,w)
\end{equation}
has a unique positive and analytic solution $z = z_0(\rho)$, $w = w_0(\rho)$ for $\rho$
in a positive interval $[\alpha,\beta]$ such that $D_w(z_0(\rho),w_0(\rho)) \ne 0$ in this range 
and that $D(z,w) = 0$ has no other solutions for $|z|\le z_0(\rho)$, $|w|\le w_0(\rho)$.
Furthermore we assume that $N(z_0(\rho),w_0(\rho)) \ne 0$.

Then we have uniformly for $m/n \in [\alpha,\beta]$
\begin{equation}\label{eqdoubleasymptotics}
[z^nw^m] f(z,w) \sim \frac{N(z_0(m/n),w_0(m/n))}{- z_0(m/n)w_0(m/n) D_z(z_0(m/n),w_0(m/n))} 
\frac {z_0(m/n)^{-n}w_0(m/n)^{-m}}{\sqrt{2\pi n \Delta(m/n) }},
\end{equation}
where
\[
\Delta(\rho) = \left.\frac{D_{zz} D_w^2 - 2D_{zw} D_z D_w + D_{ww} D_z^2}{z D_z^3}
+ \frac{D_w^2}{z^2 D_z^2} + \frac{D_w}{zw D_z}\right|_{z=z_0(\rho),w=w_0(\rho)}.
\]
\end{lemma}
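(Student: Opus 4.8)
The plan is to extract $[z^nw^m]f(z,w)$ by a two-step Cauchy integral, treating $w$ by the saddle-point method after first resolving the pole in $z$. I would begin by fixing $\rho = m/n$ (later letting $n,m\to\infty$ with $\rho\in[\alpha,\beta]$) and writing
\[
[z^nw^m] f(z,w) = \frac{1}{(2\pi i)^2}\oint\oint \frac{N(z,w)}{D(z,w)}\,\frac{dz}{z^{n+1}}\,\frac{dw}{w^{m+1}}.
\]
For the inner integral in $z$, I would deform the $z$-contour so that it passes just outside the unique root $z = \zeta(w)$ of $D(z,w)=0$ nearest the origin (which exists and is simple near $w=w_0(\rho)$ because $D_z(z_0(\rho),w_0(\rho))\ne0$, after possibly swapping the roles of $z$ and $w$ using $D_w\ne0$; the ``no other solutions'' hypothesis guarantees $\zeta(w)$ is genuinely dominant). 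Picking up the residue at $z=\zeta(w)$ gives the main term
\[
\frac{1}{2\pi i}\oint \frac{N(\zeta(w),w)}{-\zeta(w)D_z(\zeta(w),w)}\,\zeta(w)^{-n}\,\frac{dw}{w^{m+1}} \;+\; (\text{smaller contribution from the pushed-out contour}),
\]
and the remaining contour integral in $z$ is exponentially smaller since its modulus stays strictly below $\zeta(w)^{-n}$.

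Next I would apply the saddle-point method to the $w$-integral $\frac{1}{2\pi i}\oint g(w)\,e^{-n\,\phi(w;\rho)}\,dw$, where $g(w) = N(\zeta(w),w)/(-\zeta(w)D_z(\zeta(w),w)\,w)$ and $\phi(w;\rho) = \log\zeta(w) + \rho\log w$ (using $m=\rho n$). The saddle equation $\phi'(w;\rho)=0$ reads $\zeta'(w)/\zeta(w) + \rho/w = 0$; implicit differentiation of $D(\zeta(w),w)=0$ gives $\zeta'(w) = -D_w/D_z$, so the saddle condition becomes exactly $w D_w = \rho\,\zeta D_z$, i.e. the second equation of (\ref{eqsaddle}) evaluated at $z=\zeta(w)$. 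Together with $D(\zeta(w),w)=0$ this pins the saddle at $w=w_0(\rho)$, $\zeta(w_0(\rho))=z_0(\rho)$, which is positive and analytic in $\rho$ by hypothesis. The classical saddle-point formula then yields
\[
[z^nw^m]f(z,w) \sim \frac{g(w_0(\rho))\,z_0(\rho)^{-n}w_0(\rho)^{-m}}{\sqrt{2\pi n\,\phi''(w_0(\rho);\rho)}},
\]
and substituting $g(w_0) = N(z_0,w_0)/(-z_0 w_0 D_z(z_0,w_0))$ reproduces the prefactor in (\ref{eqdoubleasymptotics}). Uniformity over $\rho\in[\alpha,\beta]$ follows because all the relevant quantities — the saddle location, the curvature $\phi''$, and $g(w_0)$ — are continuous (indeed analytic) in $\rho$ on the compact interval, so the error terms in the saddle-point expansion can be bounded uniformly.

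The one genuinely computational step is to verify that $\phi''(w_0(\rho);\rho)$ equals the stated $\Delta(\rho)$. I would compute $\phi''(w) = \big(\zeta'/\zeta\big)' - \rho/w^2 = \zeta''/\zeta - (\zeta'/\zeta)^2 - \rho/w^2$, then obtain $\zeta'$ and $\zeta''$ from one and two implicit differentiations of $D(\zeta(w),w)=0$: $\zeta' = -D_w/D_z$ and $\zeta'' = -(D_{zz}\zeta'^2 + 2D_{zw}\zeta' + D_{ww})/D_z$. Plugging these in and using $\rho = wD_w/(\zeta D_z)$ at the saddle, then clearing denominators over $zD_z^3$, should collapse to the three-term expression for $\Delta(\rho)$. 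The main obstacle is less the algebra than making the contour deformations rigorous: one must argue that $\zeta(w)$ stays the unique dominant root of $D(\cdot,w)=0$ as $w$ ranges over a neighbourhood of the arc through $w_0(\rho)$ for all $\rho\in[\alpha,\beta]$ simultaneously, and that the steepest-descent path for the $w$-integral can be taken through $w_0(\rho)$ while respecting this — this is exactly the ``smooth point'' analysis of Pemantle--Wilson \cite{PW}, which I would invoke after checking that the hypotheses (simple pole variety, non-vanishing of $N$, transversality via $D_w\ne0$) match their setup, with the mild generalization that here we track the dependence on the direction parameter $\rho$.
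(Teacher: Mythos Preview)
Your approach is essentially identical to the paper's: first extract $[z^n]$ by picking up the simple pole at $z=\zeta(w)$ (the paper writes this as $[z^n]f(z,w)\sim N(z(w),w)/(-z(w)D_z(z(w),w))\,z(w)^{-n}$), then apply the saddle-point method to the remaining Cauchy integral in $w$, with the saddle determined exactly by the second equation of (\ref{eqsaddle}). You supply considerably more detail than the paper's short sketch---in particular the implicit-differentiation route to $\Delta(\rho)$ and the uniformity argument---but the strategy and the key steps coincide; note also that the swap you mention is unnecessary, since $D_w\ne 0$ together with $wD_w=\rho z D_z$ at a positive saddle already forces $D_z\ne 0$.
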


\begin{proof}
By assumption the map $z\mapsto f(z,w)$ has a unique polar singularity at $z= z(w)$,
where $z(w)$ is determined by $D(z(w),w) = 0$ (for $w$ close to the real interval
$[w_0(a),w_0(b)]$) which implies 
\[
[z^n] f(z,w) \sim  \frac{N(z(w),w)}{-z(w)D_z(z(w),w)} z(w)^{-n}.
\]
Finally we fix the ratio $m/n= \rho$ and a direct application of the saddle point method
on the Cauchy integral evaluating
\[
[w^m z^n] f(z,w) = \frac 1{2\pi i} \int_{|w| = w_0(\rho)} \left([z^n] f(z,w) \right) w^{-m-1}\, dw
\]
leads to the result. Note that the saddle point $w = w_0(\rho)$ that comes from the 
power $z(w)^{-n} w^{-\rho n}$ has to satisfy (\ref{eqsaddle}).
\end{proof}

We now apply this procedure to a slightly more general situation, namely when there is
a further parameter $v$ (that is assumed to be close to $1$):
\[
f(z,w;v) = \frac{N(z,w;v)}{D(z,w;v)}.
\]
In our context we have to identify $f(z,w;v)$ with $H(z,w;v)$ and 
$D(z,w;v)$ with $1- b(f(Q_1(z,w),0),0;v)$.
Of course we have to formulate proper assumptions (similar to the above which 
are actually satisfied for $H(z,w;v)$) and, hence, by 
(\ref{eqdoubleasymptotics}) we obtain an asymptotic expansion of the form
\[
[z^nw^m] H(z,w;v) \sim \frac{C(m/n;v)}{\sqrt{2\pi n}} z_0(m/n;v)^{-n} w_0(m/n;v)^{-m}
\]
that is uniform in $v$ (for $v$ sufficiently close to $1$).

If we fix the ratio $\rho = m/n$ the leading asymptotics is then just a power in $n$:
\[
z_0(\rho;v)^{-n} w_0(\rho;v)^{-\rho n} = e^{h(\rho;v) n }
\]
with $h(\rho;v) = -\log z_0(\rho;v) - \rho \log w_0(\rho;v)$.
Actually we have a so-called {\it quasi-power}, where we can expect that
(after proper normalization) a central limit theorem should hold.

In our context we obtain
\[
\mathbb{E}[ v^{X_{\rho n,n}}] = \frac{[z^n w^{\rho n}] H(z,w;v)}{[z^n w^{\rho n}] H(z,w;1) }
\sim \frac{C(\rho;v)}{C(\rho;1)} \left( \frac{z_0(\rho;1) w_0(\rho;1)^\rho}{z_0(\rho;v) w_0(\rho;v)^\rho}\right)^n.
\]
And this is precisely the assumption that is needed in order to apply
Hwang's {\it Quasi-Power Theorem} \cite{Hwang}.

\begin{lemma}
Let ${X}_n$ be a random variable with the property that
\begin{equation}\label{eqTh2A1}
\mathbb{E}\, {v}^{{X}_n} =  e^{\lambda_n\cdot A(v)+B(v)}
\left( 1 + O\left( \frac 1{\varphi_n} \right) \right)
\end{equation}
holds uniformly in a complex neighbourhood of ${v} ={1}$,
where $\lambda_n$ and $\varphi_n$ are sequences of positive real
numbers with $\lambda_n\to\infty$ and $\varphi_n\to\infty$, and
$A({v})$ and $B(v)$ are analytic functions in this
neighbourhood of ${v} ={1}$ with $A({1}) =B({1}) =
{0}$.
Then ${X}_n$ satisfies a central limit
theorem  of the form
\begin{equation}\label{eqPro32.2}
\frac 1{\sqrt{\lambda_n}} \left( {X}_n - \mathbb{E}\, {X}_n \right) \to
N\left( {0},\sigma^2 \right)
\end{equation}
and we have
\[
\mathbb{E}\, {X}_n = \lambda_n \mu  + O\left(
1+\lambda_n/\varphi_n \right)
\]
and 
\[
\mathbb{V}{\rm ar}\, X_n = \lambda_n {\sigma^2} + O\left( \left( 1+\lambda_n/\varphi_n \right)^2 \right),
\]
where 
\[
\mu = A'({1}) 
\]
and
\[
\sigma^2 = A''({1}) + A'(1).
\]
\end{lemma}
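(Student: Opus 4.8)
The plan is to derive the limit law directly from the hypothesis via the continuity theorem for characteristic functions, and to read off the asymptotics of mean and variance by extracting derivatives at $v=1$. Since $\mathbb{E}\,v^{X_n}$ is analytic in a fixed complex neighbourhood of $v=1$, so is the error factor $R_n(v):=\mathbb{E}\,v^{X_n}\big/e^{\lambda_n A(v)+B(v)}=1+O(\varphi_n^{-1})$, and Cauchy's inequality therefore bounds $R_n'(1)$ and $R_n''(1)$ by $O(\varphi_n^{-1})$. Differentiating $\mathbb{E}\,v^{X_n}=e^{\lambda_n A(v)+B(v)}R_n(v)$ at $v=1$ (where $A(1)=B(1)=0$ and $R_n(1)=1$) yields $\mathbb{E}\,X_n=\lambda_nA'(1)+B'(1)+O(\varphi_n^{-1})=\lambda_n\mu+O(1)$, and differentiating $K_n(t):=\log\mathbb{E}\,e^{tX_n}=\lambda_nA(e^t)+B(e^t)+\log R_n(e^t)$ twice at $t=0$ gives $\mathbb{V}{\rm ar}\,X_n=K_n''(0)=\lambda_n\bigl(A''(1)+A'(1)\bigr)+O(1)=\lambda_n\sigma^2+O(1)$; these imply the stated estimates, with $\mu=A'(1)$ and $\sigma^2=A''(1)+A'(1)$.

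For the distributional statement I would substitute $v=e^{it/\sqrt{\lambda_n}}$ for fixed real $t$; for $n$ large this point lies in the neighbourhood where (\ref{eqTh2A1}) holds. With $a(u):=A(e^u)$ one has $a(0)=0$, $a'(0)=\mu$ and $a''(0)=\sigma^2$, so a Taylor expansion gives
\[
\lambda_n A\bigl(e^{it/\sqrt{\lambda_n}}\bigr)=i\mu t\sqrt{\lambda_n}-\frac12\sigma^2t^2+O\!\left(\frac{|t|^3}{\sqrt{\lambda_n}}\right),
\]
the decisive point being that the cubic remainder survives multiplication by $\lambda_n$ only as $O(|t|^3\lambda_n^{-1/2})=o(1)$; similarly $B(e^{it/\sqrt{\lambda_n}})=o(1)$ since $B(1)=0$, and $\log\bigl(1+O(\varphi_n^{-1})\bigr)=o(1)$. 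Hence
\[
\mathbb{E}\,e^{itX_n/\sqrt{\lambda_n}}=\exp\!\Bigl(i\mu t\sqrt{\lambda_n}-\frac12\sigma^2t^2+o(1)\Bigr)
\]
uniformly on compact $t$-sets.

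It remains to assemble the two ingredients. The characteristic function of the normalised variable is
\[
\mathbb{E}\,\exp\!\Bigl(\frac{it}{\sqrt{\lambda_n}}\,(X_n-\mathbb{E}\,X_n)\Bigr)=e^{-it\,\mathbb{E}\,X_n/\sqrt{\lambda_n}}\cdot\mathbb{E}\,e^{itX_n/\sqrt{\lambda_n}},
\]
and inserting $\mathbb{E}\,X_n=\lambda_n\mu+O(1)$ makes the phase $i\mu t\sqrt{\lambda_n}$ cancel, so the right-hand side tends to $e^{-\sigma^2t^2/2}$ for every $t$; by Lévy's continuity theorem this gives (\ref{eqPro32.2}) with limit $N(0,\sigma^2)$. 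The step I expect to be the main obstacle is exactly this cancellation: to eliminate the non-vanishing term $i\mu t\sqrt{\lambda_n}$ one must know $\mathbb{E}\,X_n$ to within $o(\sqrt{\lambda_n})$, which is why one cannot rely on the mere size of the error but must exploit its analyticity (through Cauchy's estimates) to obtain $\mathbb{E}\,X_n=\lambda_n\mu+O(1)$. Once this is in hand, the remaining bookkeeping of Taylor remainders and uniform $o(1)$-terms is routine; the argument is, of course, classical and goes back to \cite{Hwang}.
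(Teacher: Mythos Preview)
Your argument is correct and is essentially the standard proof of Hwang's Quasi-Power Theorem. Note, however, that the paper does not prove this lemma at all: it is quoted verbatim as a known result, with attribution to \cite{Hwang}, and used as a black box in the derivation of Theorem~\ref{Th1}. So there is no ``paper's own proof'' to compare against here; what you have written is a clean reconstruction of the classical proof (Cauchy estimates on the analytic error factor to control moments, then a Taylor expansion of the cumulant generating function at $v=e^{it/\sqrt{\lambda_n}}$ combined with L\'evy's continuity theorem), and in fact your moment bounds $\mathbb{E}\,X_n=\lambda_n\mu+O(1)$ and $\mathbb{V}{\rm ar}\,X_n=\lambda_n\sigma^2+O(1)$ are slightly sharper than the stated $O(1+\lambda_n/\varphi_n)$ and $O((1+\lambda_n/\varphi_n)^2)$, which they of course imply.
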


Recall that  $A(v) = h(\rho;v) = -\log z_0(\rho;v) - \rho \log w_0(\rho;v)$ so that
\[
\mu = \mu(\rho) = - \frac{z_{0,v}(\rho;1)}{z_{0}(\rho;1)} - \rho  \frac{w_{0,v}(\rho;1)}{w_{0}(\rho;1)}.
\]
Since we have assumed that $X_{n,m}$ are non-negative random variables we can only expect
a central limit theorem if $\mu > 0$, since for $\mu = 0$ it would follow that $X_{n,m}$ is
negative with probability $1/2$.

Finally we mention that since the convergence is uniform in $\rho\in [a,b]$ we also get 
a central limit theorem for $n,m\to\infty$ if $m/n\in [a,b]$. 
This completes the proof of our main Theorem~\ref{Th1}.

\acknowledgements
\label{sec:ack}
The authors are grateful to three anonymous referees for their careful reading and their valuable comments.


\end{document}